\documentclass[reqno,a4paper]{amsart}
\usepackage{amsmath}
\usepackage{amssymb,amsfonts}
\usepackage{amsthm}
\usepackage{enumerate}
\usepackage[mathscr]{eucal}
\usepackage{eqlist}

\setcounter{MaxMatrixCols}{10}

\setlength{\textwidth}{121.9mm}
\setlength{\textheight}{176.2mm}
\theoremstyle{plain}
\newtheorem{theorem}{Theorem}[section]
\newtheorem{prop}[theorem]{Proposition}

\newtheorem{corol}{Corollary}[theorem]

\theoremstyle{definition}
\newtheorem{definition}{Definition}[section]
\newtheorem{remark}{\textnormal{\textbf{Remark}}}

\theoremstyle{remark}

\numberwithin{equation}{section}

\begin{document}

\title[Elliptic Curves Containing Sequences of Consecutive Cubes]
{Elliptic Curves Containing Sequences of Consecutive Cubes }

\author{ Gamze Sava\c{s} \c{C}EL\.{I}K and G\"{o}khan Soydan}

\address{{\bf Gamze Sava\c{s} \c{C}elik}\\
Department of Mathematics \\
Uluda\u{g} University\\
 16059 Bursa, Turkey}
\email{gamzesavas91@gmail.com; gamzesavascelik@gmail.com }

\address{{\bf G\"{o}khan Soydan} \\
	Department of Mathematics \\
	Uluda\u{g} University\\
	16059 Bursa, Turkey}
\email{gsoydan@uludag.edu.tr }

\newcommand{\acr}{\newline\indent}

\thanks{}

\subjclass[2010]{Primary 14G05, Secondary 11B83}
\keywords{Elliptic curves, rational points, sequences of consecutive cubes.}

\begin{abstract}
Let $E$ be an elliptic curve over $\mathbb{Q}$ described by $y^2= x^3+ Kx+ L$ where $K, L \in \mathbb{Q}$. A set of rational points $(x_i,y_i) \in E(\mathbb{Q})$ for $i=1, 2, \cdots, k$, is said to be a sequence of consecutive cubes on $E$ if the $x-$coordinates of the points $x_i$'s for $i=1, 2, \cdots$ form consecutive cubes. In this note, we show the existence of an infinite family of elliptic curves containing a length-$5$-term sequence of consecutive cubes. Morever, these five rational points in $E (\mathbb{Q})$ are linearly independent and the rank $r$ of $E(\mathbb{Q})$ is at least $5$.
\end{abstract}

\maketitle

\section{Introduction}\label{sec:1}

Let us consider a rational elliptic curve given by a Weierstrass equation
\begin{equation} \label{eq:1}
y^2+a_1xy+a_3y=x^3+a_2x^2+a_4x+a_6
\end{equation}
with $a_1, \cdots, a_6 \in \mathbb{Q}$. We will say that the points $(x_i, y_i), i=1, \cdots, k$ on the curve   \eqref{eq:1} are in arithmetic progression of length $k$ if the sequence $x_1, x_2, \cdots, x_k$ forms an arithmetic progression (AP for short).\\

In 1992, Lee and V\'{e}lez, \cite{LV}, found infinitely many curves of type $y^{2}=x^{3}+a$ containing $k=4$-length APs. In 1999, Bremner, \cite{Br}, showed that there are infinitely many elliptic curves with $k=7$ and $k=8$-length APs. We shall briefly say $k$-AP instead of $k$-length AP. Four years later, Campbell, \cite{Cam}, gave a different method to produce infinite families of elliptic curves with $k=7$ and $k=8$ APs. In addition, he described a method for obtaining infinite families of quartic elliptic curves with $k=9$ AP and gave an example of a quartic elliptic curve with $k=12$ AP. Two years later, Ulas \cite{Ul}, first described a construction method for an infinite family of quartic elliptic curves on which there exists an AP with $k=10$. Secondly he showed that there is an infinite family of quartics containing AP with $k=12$. In $2006$, Macleod, \cite{Mac}, showed that by simplifying Ulas' approach, more general parametric solutions for APs arise with $k=10$ which give a large number of examples with $k=12$ and a few with $k=14$.\\

Let $f(x)$ be an irreducible polynomial over $\mathbb{Q}$ of degree five. Consider the hyperelliptic curve $y^2=f(x)$. In $2009$, Ulas, \cite{U2}, found an infinite family of curves on which there is an AP  with $k=11$. In the same year, Alvarado, \cite{Al}, showed the existence of an infinite family of curves which contain APs with $k=12$. Recently Dey and Maji, \cite{DM}, found upper bounds for the lengths of sequences of rational points on Mordell curves which are defined by the equations of the type $y^2=x^3+k$, $k \in \mathbb{Q} \backslash \{0\}$, such that the ordinates of the points are in AP, and also when both the abscissae and ordinates of the points are seperately the terms of two APs.\\

In $2013$, Bremner and Ulas, \cite{BU}, considered the sequences of rational points on elliptic curves whose $x-$coordinates form a \textquotedblleft geometric progression\textquotedblright in $\mathbb{Q}$. They obtained an infinite family of elliptic curves having geometric progression sequence of length $4$ and they also pointed out infinitely many elliptic curves with length $5$ geometric progression sequences can be obtained.\\

Recently, Kamel and Sadek, \cite{KS}, considered sequences of rational points on elliptic curves given by the equation $y^2=ax^3+bx+c$ over $\mathbb{Q}$ whose $x-$coordinates form a sequence of consecutive squares. They showed that elliptic curves given by the latter equation with $5-$term sequences of rational points whose $x-$coordinates are elements of a sequence of consecutive squares in $\mathbb{Q}$ parametrized by an elliptic surface whose rank is positive. This implies the existence of infinitely many such elliptic curves. They also showed that these five rational points in the sequence are linearly independent in the group of rational points of the elliptic curve they lie on. Especially, they introduced an infinite family of elliptic curves of rank$\geq 5$.\\

In this work, we investigate sequences of rational points on elliptic curves whose $x-$coordinates form a sequence of \textquotedblleft consecutive cubes \textquotedblright. We consider elliptic curves given by the equation  $y^2=kx^3+lx+m$ over $\mathbb{Q}$. Following the strategy in \cite{KS}, we obtain all their results which we detailed in the previous paragraph for \textquotedblleft consecutive cubes\textquotedblright.

\section{SEQUENCES OF CONSECUTIVE CUBES}\label{sec:2}

\begin{definition}\label{def:2.1}
Let $E$ be an elliptic curve defined over a number field $F$ by the Weierstrass equation
\begin{equation}  \label{eq:2}
y^2+a_1xy+a_3y=x^3+a_2x^2+a_4x+a_6, \ a_i \in F  .
\end{equation}
The points $(x_i, y_i) \in E(F)$ are said to form a sequence of \textquotedblleft consecutive cubes\textquotedblright on $E$ if there is  $c \in F$ such that $x_i=(c+i)^3, i=1, 2, \cdots$.
\end{definition} 

Now we first need a result which guarantees the finiteness of the sequence of consecutive cubes on an elliptic curve. In $1910$, Mordell conjectured that if $\varepsilon$  is an algebraic curve over $F$ of genus $g \geq 2$, then there are only finitely many rational points on $\varepsilon$, i.e. the set $\varepsilon (F)$ of $F-$rational points is finite. In $1983$, this conjecture was proved by Faltings, \cite{Fa}, and hence is now also known as Faltings' theorem. So, using Faltings' theorem we give the following proposition about the finiteness of consecutive cubes on an elliptic curve.

\begin{prop}\label{prop:2.2}
Let $E$ be an elliptic curve defined by \eqref{eq:2} over a number field $F$. Let $(x_i, y_i) \in E(F)$ be a sequence of consecutive cubes on $E$. Then the sequence $(x_i, y_i)$ is finite.
\end{prop}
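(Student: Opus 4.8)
The plan is to realize an infinite sequence of consecutive cubes as an infinite set of $F$-rational points on an auxiliary curve of genus $\geq 2$, and then invoke Faltings' theorem.

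First I would reduce to a short Weierstrass model: completing the square in $y$ is an $F$-isomorphism that leaves all $x$-coordinates unchanged, so we may assume $E$ is given by $y^2=f(x)$ with $f(x)=x^3+b_2x^2+b_4x+b_6\in F[x]$. Suppose, for contradiction, that the sequence $(x_i,y_i)\in E(F)$ with $x_i=(c+i)^3$, $c\in F$, is infinite. Put $g(t)=f(t^3)=t^9+b_2t^6+b_4t^3+b_6$ and let $C$ be the smooth projective model of the affine curve $y^2=g(t)$ over $F$. Since $\deg g=9$ is odd, $g$ is not a square in $\overline{F}(t)$, so $C$ is a geometrically irreducible curve. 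Each point satisfies $y_i^2=f(x_i)=g(c+i)$, so $(c+i,y_i)$ lies on $y^2=g(t)$; as $g$ has only finitely many multiple roots, all but finitely many of these points are smooth points of the affine curve and hence give $F$-rational points of $C$, pairwise distinct because the $t$-coordinates $c+i$ are distinct. Thus $C(F)$ is infinite.

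Next I would bound the genus of $C$ from below using the cubing map $\pi\colon C\to E$, $(t,y)\mapsto(t^3,y)$, which is a finite morphism of smooth projective curves of degree $3$ (it multiplies degrees by $[F(t):F(t^3)]=3$). Since $\operatorname{char}F=0$, all ramification is tame, so Riemann--Hurwitz gives
\begin{equation}\label{eq:RH}
2g(C)-2=3\bigl(2g(E)-2\bigr)+\deg R=\deg R,
\end{equation}
where $R$ is the ramification divisor, using $g(E)=1$. Because $\deg g=9$ is odd, there is a unique point $\infty_C\in C$ above $t=\infty$, and writing $u=1/t$, $w=y/t^5$ one checks $w^2=u\cdot(\text{unit at }\infty_C)$, whence $\operatorname{ord}_{\infty_C}(t)=-2$; on $E$ the point at infinity $O$ satisfies $\operatorname{ord}_O(x)=-2$. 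From $\pi^\ast x=t^3$ we get $-6=\operatorname{ord}_{\infty_C}(t^3)=e_{\infty_C}(\pi)\cdot\operatorname{ord}_O(x)=-2\,e_{\infty_C}(\pi)$, so $\pi$ ramifies at $\infty_C$ with index $3$. Hence $\deg R\geq e_{\infty_C}(\pi)-1=2$, and \eqref{eq:RH} yields $g(C)\geq 2$ (a more careful count in fact gives $g(C)\in\{3,4\}$).

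Finally, by Faltings' theorem a curve of genus $\geq 2$ over a number field has only finitely many rational points, contradicting the infinitude of $C(F)$. Therefore the sequence $(x_i,y_i)$ is finite. The only slightly delicate steps are the local ramification computation of $\pi$ at infinity and the geometric irreducibility of $C$ needed for Faltings and for \eqref{eq:RH}; both are immediate consequences of $\deg g=9$ being odd, and I anticipate no other obstacle.
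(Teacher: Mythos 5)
Your proof is correct and takes essentially the same route as the paper: substitute $x=t^3$ into the Weierstrass equation to obtain a degree-$9$ hyperelliptic-type curve with infinitely many $F$-points and invoke Faltings' theorem. Your Riemann--Hurwitz argument carefully justifies the genus bound that the paper merely asserts (and your count $g(C)\in\{3,4\}$ is the accurate one --- the paper's claim that the auxiliary curve has genus $5$ slightly overcounts for a degree-$9$ model, though this is immaterial since only $g\geq 2$ is needed).
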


\begin{proof}
Assume without loss of generality that $x_i=(c+i)^3,\ i=1, 2, \cdots, c \in F$. This sequence leads to a sequence of rational points on the genus $5$ hyperelliptic curve
\begin{equation*}
E': y^2+a_1x^3y+a_3y=x^9+a_2x^6+a_4x^3+a_6.
\end{equation*}

Thus, the points $(c+i, y) \in E'(F)$. By using Faltings' theorem \cite{Fa}, we obtain that $E'(F)$ is finite, so the sequence is finite.
\end{proof}

Next, using the above proposition, we define the length of this sequence like AP.

\begin{definition}\label{def:2.3}
Let $E$ be an elliptic curve over $\mathbb{Q}$ defined by a Weierstrass equation. Let $(x_i, y_i) \in E(\mathbb{Q}),\ i=1, 2, \cdots, n$, be a sequence of consecutive cubes on $E$. Then $n$ is called the length of the sequence.
\end{definition}

\section{CONSTRUCTING ELLIPTIC CURVES CONTAINING 5- TERM SEQUENCES OF CONSECUTIVE CUBES }\label{sec:3}

In this section, we investigate a family of  elliptic curves given by the affine equation

\begin{equation} \label{eq:3}
E: y^2=kx^3+lx+m
\end{equation} over $\mathbb{Q}.$ We will show that there exist infinitely many elliptic curves given by the latter equation having $5-$term sequences of consecutive cubes.\\

We consider $3-$term sequences of consecutive cubes. So, if $((c-1)^3,p), (c^3,q)$, and $((c+1)^3,r)$ lie in $E(\mathbb{Q})$, where $c \in \mathbb{Q}$, then these rational points form a $3-$term sequence of consecutive cubes. Using these points, we obtain 
\begin{equation*}
p^2=k(c-1)^9+l(c-1)^3+m,
\end{equation*}
\begin{equation*}
q^2=kc^9+lc^3+m,
\end{equation*}
\begin{equation*}
r^2=k(c+1)^9+l(c+1)^3+m.
\end{equation*}

Hence, solving this system gives the following

\begin{equation*}
k=[(3c^2+3c+1)p^2+(-6c^2-2)q^2+(3c^2-3c+1)r^2]/6c(27c^8+54c^6+c^2+2),
\end{equation*}
\begin{equation*}
\begin{aligned}
 l &=[-(9c^8+36c^7+84c^6+126c^5+126c^4+84c^3+36c^2+9c+1)p^2+(18c^8\\
 &+168c^6+252c^4+72c^2+2)q^2-(9c^8-36c^7+84c^6-126c^5+126c^4-84c^3\\
 &+36c^2-9c+1)r^2]/6(3c^2-3c+1)(9c^6+9c^5+24c^4+21c^3+13c^2+6c+2)c,
\end{aligned}
\end{equation*}
\begin{equation}\label{eq:4}
\begin{aligned}
m&=[(6c^{10}+33c^9+83c^8+126c^7+126c^6+84c^5+36c^4+9c^3+c^2)p^2\\
&+(-12c^{10}-4c^8+72c^6-72c^4+4c^2+12)q^2+(6c^{10}-33c^9+83c^8-126c^7\\
&+126c^6-84c^5+36c^4-9c^3+c^2)r^2]/6(3c^2-3c+1)(9c^6+9c^5+24c^4+21c^3\\
&+13c^2+6c+2).
\end{aligned}
\end{equation}

Then, we obtain the following result:

\begin{remark}\label{rem:3.1}
As above, for given $p, q, r \in \mathbb{Q}(c)$, we know the existence of $k, l, m \in \mathbb{Q}(c)$ such that the ordered pairs $((c-1)^3, p), (c^3,q)$ and $((c+1)^3,r)$ are three rational points lying on \eqref{eq:3}.
\end{remark}

Now, secondly assuming $((c+2)^3,s)$ is a rational point on \eqref{eq:3}, we obtain a $4-$term sequence of consecutive cubes on \eqref{eq:3}. Putting the values $k, l, m$ and $((c+2)^3,s)$ in \eqref{eq:3}, one can find
\begin{equation}\label{eq:5}
\begin{aligned}
s^2&=[(84+2109c^2+626c+243c^8+27c^9+1026c^7+2646c^6+4536c^5+5292c^4\\
&+4159c^3)p^2+(-1674c^7-1950c^2-762c-3951c^3-5544c^4-486c^8\\
&-3780c^6-5544c^5-168-81c^9)q^2+(702c^7+1134c^6+138c-159c^2+243c^8\\
&+81c^9+252c^4+1008c^5+84-207c^3)r^2]/(3c^2-3c+1)(3c^2+1)(1+3c^2\\
&+3c)(c^2+2)c.
\end{aligned}
\end{equation}

Thus, we need to find the elements $p, q, r$ and $s$ in $\mathbb{Q}(c)$ which satisfy the equation \eqref{eq:5}. Now let's explain how to find the general solution $(p, q, r, s)$ for equation \eqref{eq:5}.\\

Consider the quadratic surface
\begin{equation*}
S: a_1x^2+a_2y^2+a_3z^2+a_4t^2=0
\end{equation*}
over $\mathbb{Q}$ and the line 
\begin{equation*}
aQ_1+bQ_2=(a+bu_1: a+bv_1: a+bw_1: a)
\end{equation*}
connecting the rational points $Q_1=(1:1:1:1)$ and $Q_2=(u_1: v_1: w_1: 0)$ lying on $S$ in three-dimensional projective space $\mathbb{P}^3$. The intersection of $S$ and $aQ_1+bQ_2$ yields the quadratic equation
\begin{equation*}
(a_1+a_2+a_3+a_4)a^2+(a_1u_1^2+a_2v_1^2+a_3w_1^2)b^2+(2a_1u_1+2a_2v_1+2a_3w_1)ab=0.
\end{equation*}
Using $Q_1$ and $Q_2$ lying on $S$, one solves this quadratic equation and obtains formulae for solutions $(x, y, z, t)$. Since $(p, q, r, s)=(1, 1, 1, 1)$ is a solution for equation \eqref{eq:5}, applying the above procedure to \eqref{eq:5} gives the general solution $(p, q, r, s)$ with the following parametrization:

\begin{equation*}
\begin{aligned}
p&=\left( c+1 \right)  \left( 3\,{c}^{2}+3\,c+1
\right)  \left( 3\,{c}^{2}+6\,c+4 \right)  \left( 3\,{c}^{2}+9\,c+7
\right)  \left( {c}^{2}+2\,c+3 \right) {u}^{2}\\
&+3\, \left( {c}^{2}+c+1
\right)  \left( 3\,{c}^{2}+1 \right)  \left( 3\,{c}^{2}+9\,c+7
\right)  \left( 3\,{c}^{3}+6\,{c}^{2}+18\,c+8 \right) {v}^{2}\\
&-3\,\left( {c}^{2}+c+1 \right)  \left( 3\,{c}^{2}-3\,c+1 \right)  \left( 
3\,{c}^{2}+6\,c+4 \right)  \left( 3\,{c}^{3}+3\,{c}^{2}+15\,c+7
\right) {w}^{2}\\
&-6\, \left( {c}^{2}+c+1 \right)  \left( 3\,{c}^{2}+1 \right)  \left( 3
\,{c}^{2}+9\,c+7 \right)  \left( 3\,{c}^{3}+6\,{c}^{2}+18\,c+8
\right) vu\\
&+6\, \left( {c}^{2}+c+1 \right)  \left( 3\,{c}^{2}-3\,c+1
\right)  \left( 3\,{c}^{2}+6\,c+4 \right)  \left( 3\,{c}^{3}+3\,{c}^{
2}+15\,c+7 \right) wu,
\end{aligned}
\end{equation*}
\begin{equation*}
\begin{aligned}
q&=- \left( c+1 \right)  \left(3\,{c}^{2}+3
\,c+1 \right)  \left( 3\,{c}^{2}+6\,c+4 \right)  \left( 3\,{c}^{2}+9\,c+
7 \right)  \left( {c}^{2}+2\,c+3 \right) {u}^{2}\\
&-3\, \left( {c}^{2}+c+1 \right)  \left( 3\,{c}^{2}+1 \right)  \left( 3\,{c}^{2}+9\,c+7
\right)  \left( 3\,{c}^{3}+6\,{c}^{2}+18\,c+8 \right) {v}^{2}\\
&-3\,\left( {c}^{2}+c+1 \right)  \left( 3\,{c}^{2}-3\,c+1 \right)  \left( 
3\,{c}^{2}+6\,c+4 \right)  \left( 3\,{c}^{3}+3\,{c}^{2}+15\,c+7
\right) {w}^{2}\\
&2\, \left( c+1 \right)  \left( 3\,{c}^{2}+3\,c+1 \right)  \left( 3\,{c
}^{2}+6\,c+4 \right)  \left( 3\,{c}^{2}+9\,c+7 \right)  \left( {c}^{2}
+2\,c+3 \right) vu\\
&+6\, \left( {c}^{2}+c+1 \right)  \left( 3\,{c}^{2}-3
\,c+1 \right)  \left( 3\,{c}^{2}+6\,c+4 \right)  \left( 3\,{c}^{3}+3\,
{c}^{2}+15\,c+7 \right) wv,
\end{aligned}
\end{equation*}
\begin{equation*}
\begin{aligned}
r&=- \left( c+1 \right)  \left( 3\,{c}^{2}+3\,c+1
\right)  \left( 3\,{c}^{2}+6\,c+4 \right)  \left( 3\,{c}^{2}+9\,c+7
\right)  \left( {c}^{2}+2\,c+3 \right) {u}^{2}\\
&+3\, \left( {c}^{2}+c+1
\right)  \left( 3\,{c}^{2}+1 \right)  \left( 3\,{c}^{2}+9\,c+7
\right)  \left( 3\,{c}^{3}+6\,{c}^{2}+18\,c+8 \right) {v}^{2}\\
&+3\,
\left( {c}^{2}+c+1 \right)  \left( 3\,{c}^{2}-3\,c+1 \right)  \left( 
3\,{c}^{2}+6\,c+4 \right)  \left( 3\,{c}^{3}+3\,{c}^{2}+15\,c+7
\right) {w}^{2}\\
&2\, \left( c+1 \right)  \left(3\,{c}^{2}+3\,c+1 \right)  \left( 3\,{c
}^{2}+6\,c+4 \right)  \left( 3\,{c}^{2}+9\,c+7 \right)  \left( {c}^{2}
+2\,c+3 \right) wu\\
&-6\, \left( {c}^{2}+c+1 \right)  \left( 3\,{c}^{2}+1
\right)  \left( 3\,{c}^{2}+9\,c+7 \right)  \left( 3\,{c}^{3}+6\,{c}^{
2}+18\,c+8 \right) wv,
\end{aligned}
\end{equation*}
\begin{equation}\label{eq:6}
\begin{aligned}
s&=- \left( c+1 \right)  \left( 3\,{c}^{2}+3\,c+1 \right)  \left( 3\,{c}^
{2}+6\,c+4 \right)  \left( 3\,{c}^{2}+9\,c+7 \right)  \left( {c}^{2}+2
\,c+3 \right) {u}^{2}\\
&+3\, \left( {c}^{2}+c+1 \right)  \left( 3\,{c}^{2
}+1 \right)  \left( 3\,{c}^{2}+9\,c+7 \right)  \left( 3\,{c}^{3}+6\,{c
}^{2}+18\,c+8 \right) {v}^{2}\\
&-3\, \left( {c}^{2}+c+1 \right)  \left( 3
\,{c}^{2}-3\,c+1 \right)  \left( 3\,{c}^{2}+6\,c+4 \right)  \left( 3\,
{c}^{3}+3\,{c}^{2}+15\,c+7 \right) {w}^{2}.
\end{aligned}
\end{equation}

See \cite{Mor} for details about finding parametric rational solutions of a homogenous polynomial of degree $2$ in several variables.

\begin{remark}\label{rem:3.2}
The above argument shows that given $p, q, r, s \in \mathbb{Q}(c, u, v, w)$, there exist $k, l, m\in\mathbb{Q}(c)$ such that the ordered pairs {\small $((c-1)^3, p), (c^3,q), ((c+1)^3,r)$}, {\small $((c+2)^3,s)$} are four rational points on \eqref{eq:3}.
\end{remark}

Next we consider the case when  $((c-2)^3, t) \in E( \mathbb{Q})$. In this case, there exists a $5-$term sequence of consecutive cubes on \eqref{eq:3}. Then one obtains 
\begin{equation}\label{eq:7}
t^2=Ku^4+Lu^3+Mu^2+Nu+P
\end{equation}
with
\begin{equation*}
K= [\left( c+1 \right) \left( 3\,{c}^{2}+3\,c+1 \right) \left( 
3\,{c}^{2}+6\,c+4 \right) \left( 3\,{c}^{2}+9\,c+7 \right)
\left( {c}^{2}+2\,c+3 \right)]^2,
\end{equation*}
\begin{equation*}
\begin{aligned}
L&=-24\, \left( c+1 \right)  \left( {c}^{2}+2\,c+3 \right)\left(3\,{
c}^{2}+3\,c+1 \right)\left( 3\,{c}^{2}+4 \right)  \left( 3\,{c}^{2}+6
\,c+4 \right) ( 3\,{c}^{2}\\
&+9\,c+7 ) ^{2} \left( {c}^{2}-c+
1 \right)  \left( 3\,{c}^{3}+3\,{c}^{2}+27\,c+1 \right) v+32c\, \left( 
c+1 \right)  \left( {c}^{2}+2\,c+3 \right)\\
&(3\,{c}^{2}+3\,c+1)\left( 3\,{c}^{2}+4 \right)\left( 3\,{c}^{2}+6\,c+4
\right)^{2} \left( 3\,{c}^{2}+9\,c+7 \right) \left( {c}^{2}+8
\right) ( 3\,{c}^{2}-6\,c\\
&+4 )w,
\end{aligned}
\end{equation*}
\begin{equation*}
\begin{aligned}
M&=6\,( 1215\,{c}^{14}+4131\,{c}^{13}+24543\,{c}^{12}+49383\,{c}^{11}+
134460\,{c}^{10}+152118\,{c}^{9}\\
&+263619\,{c}^{8}+229491\,{c}^{7}+
297153\,{c}^{6}+223859\,{c}^{5}+208374\,{c}^{4}+123018\,{c}^{3}\\
&+52116
\,{c}^{2}+16504\,c+480)  \left( 3\,{c}^{2}+9\,c+7 \right) ^{2}{v}^{2}-24\, \left( {c}^{2}+c+1 \right)  \left( 3\,{c}^{2}+4 \right)\\
&  \left( 
3\,{c}^{2}+6\,c+4 \right)  \left( 3\,{c}^{2}+9\,c+7 \right) ( 
189\,{c}^{10}+2340\,{c}^{8}-1530\,{c}^{7}+7383\,{c}^{6}\\
&-9918\,{c}^{5}+
7365\,{c}^{4}+740\,{c}^{3}-246\,{c}^{2}+1552\,c+21 ) wv+2\,( 2835\,{c}^{14}\\
&-8424\,{c}^{13}+37881\,{c}^{12}-109188
\,{c}^{11}+156276\,{c}^{10}-373032\,{c}^{9}+395718\,{c}^{8}\\
&-344976\,{c
}^{7}+374198\,{c}^{6}-336324\,{c}^{5}-98956\,{c}^{4}-407760\,{c}^{3}-
243937\,{c}^{2}\\
&+2688\,c+441 ) \left( 3\,{c}^{2}+6\,c+4 \right) 
^{2}{w}^{2},
\end{aligned}
\end{equation*}
\begin{equation*}
\begin{aligned}
N&=-72\,( {c}^{2}+c+1) ( 3\,{c}^{2}+4)( 
{c}^{2}-c+1)( 3\,{c}^{2}+1) ( 3\,{c}^{3}+6
\,{c}^{2}+18\,c+8)\\
&( 3\,{c}^{3}+3\,{c}^{2}+27\,c+1)( 3\,{c}^{2}+9\,c+7) ^{2}{v}^{3}+48\,( {c}^{2}+c+1)( 3\,{c}^{2}+4)\\
&( 3\,{c}^{2}+6\,c+4)(3\,{c}^{2}+9\,c+7)(27\,{c}^{10}+450\,{c}^{8}-450\,{c}^{7}+2019\,{c}^{6}-1494\,{c}^{5}\\
&+2325\,{c}^{4}-1180\,{c}^{3}-1398\,{c}^{2}+16\,c+21) {v}^{2}w+24\,( {c}^{2}+c+1)( 3\,{c}^{2}+4)\\
&(3\,{c}^{2}+6\,c+4)( 3\,{c}^{2}+9\,c+7)(135\,{c}^{10}+1440
\,{c}^{8}-630\,{c}^{7}+3345\,{c}^{6}-6930\,{c}^{5}\\
&+2715\,{c}^{4}+3100
\,{c}^{3}+2550\,{c}^{2}+1520\,c-21)v{w}^{2}-96c\,( {c}^{2
}+c+1)( 3\,{c}^{2}+4)\\
&( 3\,{c}^{2}-3\,c+1)( 3\,{c}^{2}-6\,c+4)( 3\,{c}^{3}+3\,{c}^{2}+15\,c+7 )( {c}^{2}+8)( 3\,{c}^{2}+6\,c+4) ^{2}{w}^{3},
\end{aligned}
\end{equation*}
\begin{equation*}
\begin{aligned}
P&=[3( {c}^{2}+c+1 )\left( 3
\,{c}^{2}-3\,c+1 \right) \left( 3\,{c}^{2}+6\,c+4 \right)
( 3\,{c}^{3}+3\,{c}^{2}+15\,c+7 )]^2{w}^{4}\\
&+72\,\left( 3\,{c}^{2}-3\,c+1 \right)  \left( 3\,{c}^{2}+4 \right) 
\left( 3\,{c}^{2}+6\,c+4 \right)  \left( 3\,{c}^{2}-9\,c+7 \right)
( 3\,{c}^{3}+3\,{c}^{2}\\
&+15\,c+7)  \left( 3\,{c}^{3}-3\,{c}^{2}+27\,c-1 \right)  \left( {c}^{2}+c+1 \right) ^{2}{w}^{3}v-18\,(6561c^{14}+2187c^{13}\\
&+91125c^{12}+42039c^{11}+287712c^{10}-5994c^9-224127c^8+6399c^7+316035c^6\\
&+232191c^5+1581642c^4+299082c^3+294228c^2+248472c-7840 ) ( {c}^{2}+c\\
&+1 ) ^{2}{w}^{2}
{v}^{2}+72\, \left( 3\,{c}^{2}+1 \right)  \left( 3\,{c}^{2}+4 \right)
\left( 3\,{c}^{2}+9\,c+7 \right) \left( 3\,{c}^{2}-9\,c+7 \right)( 3\,{c}^{3}-3\,{c}^{2}\\
&+27\,c-1 )\left( 3\,{c}^{3}+6\,{c
}^{2}+18\,c+8 \right) \left( {c}^{2}+c+1 \right)^{2}w{v}^{3}+[3\,
\left( {c}^{2}+c+1 \right)\left( 3\,{c}^{2}+1 \right)\\
&
\left( 3\,{c}^{2}+9\,c+7 \right)\left( 3\,{c}^{3}+6\,{c}^{2}+18
\,c+8 \right)]^2{v}^{4}.
\end{aligned}
\end{equation*}

We see that the above expressions are homogeneous in $v$ and $w$. So, we may assume that $w=1$. Now we consider the curve

\begin{equation}\label{eq:8}
H: Y^2=KX^4+LX^3+MX^2+NX+P
\end{equation}
over 
$\mathbb{Q}(c,v)$. This equation of the form \eqref{eq:8} is birationally equivalent to an elliptic curve $\chi$ defined by the form 
\begin{equation}\label{eq:9}
\chi: V^2=U^3-27IU-27J
\end{equation}
where
\begin{equation}\label{eq:10}
I=12KP-3LN+M^2
\end{equation}
and
\begin{equation}\label{eq:11}
J=72KMP+9LMN-27KN^2-27L^2P-2M^3.
\end{equation}
The discriminant $\Delta(\chi)$ of $\chi$ is given by $(4I^3-J^2)/27$, and the specialization of $\chi$ is singular only if $\Delta(\chi)=0$. Furthermore, the point 
\begin{equation}\label{eq:12}
R=\left( 3\dfrac{3L^2-8KM}{4K}, 27\dfrac{L^3+8K^2N-4KLM}{8K^{3/2}}\right)
\end{equation}
lies in $\chi(\mathbb{Q}(c,v))$ since $K$ is a square (see \cite[chapter 3, pp. 89-91]{JC} for details).

Now we are ready to give the following result.

\begin{theorem}\label{theo:3.3}
The curve \eqref{eq:8} is birationally equivalent over $\mathbb{Q}(c,v)$ to an elliptic curve $\chi$ with rank $\chi(\mathbb{Q}(c,v))\geqslant 1$.
\end{theorem}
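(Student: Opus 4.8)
The plan is to take the classical reduction for granted and then settle the rank by a single specialization. The birational equivalence of the quartic $H$ in \eqref{eq:8} with the Weierstrass cubic $\chi$ in \eqref{eq:9}, as well as the assertion that $R$ in \eqref{eq:12} lies on $\chi(\mathbb{Q}(c,v))$, is exactly the construction recalled in \cite[Ch.~3, pp.~89--91]{JC}: since the leading coefficient $K$ of the quartic is a perfect square, $K^{1/2}$ and $K^{3/2}$ belong to $\mathbb{Q}(c,v)$, the points at infinity of $H$ are $\mathbb{Q}(c,v)$-rational, and carrying one of them through the transformation yields $R$. So nothing new is needed there, and the whole content of the theorem is the inequality rank $\chi(\mathbb{Q}(c,v))\geqslant 1$. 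For that it is enough to produce one point of infinite order, and $R$ is the obvious candidate; thus the task reduces to showing that $R$ is non-torsion in $\chi(\mathbb{Q}(c,v))$.

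To prove $R$ is non-torsion I would specialize. I would choose a convenient pair $(c_0,v_0)\in\mathbb{Q}^2$ lying off the (finitely many, after clearing denominators) zero loci of the denominators occurring in \eqref{eq:4}--\eqref{eq:7} and off the vanishing locus of $\Delta(\chi)$; for such a pair $K,L,M,N,P$ specialize to rationals, $I_0,J_0$ are then computed from \eqref{eq:10}--\eqref{eq:11}, and $R$ specializes to a rational point $R_0$ on the honest elliptic curve $\chi_0:V^2=U^3-27I_0U-27J_0$ over $\mathbb{Q}$. I would then verify, with a computer algebra system, (i) that $\Delta(\chi_0)=(4I_0^3-J_0^2)/27\neq 0$ — which at the same time certifies that $\chi$ is genuinely an elliptic curve over $\mathbb{Q}(c,v)$, i.e. that $\Delta(\chi)$ is not the zero element of the function field — and (ii) that $R_0$ has infinite order on $\chi_0(\mathbb{Q})$. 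Step (ii) can be done in several ways: compute a global minimal model and observe that a coordinate of $R_0$ is non-integral (Nagell--Lutz), or compute $nR_0$ for $1\le n\le 12$ and check that none is $O$ (so by Mazur's theorem $R_0$ cannot be torsion), or simply certify numerically that the canonical height $\hat h(R_0)$ is positive.

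The concluding step is then immediate: specialization at a point of good reduction is a group homomorphism $\chi(\mathbb{Q}(c,v))\to\chi_0(\mathbb{Q})$, so if $R$ were torsion of order $n$ over the function field we would obtain $nR_0=O$, contradicting (ii). Hence $R$ generates a subgroup isomorphic to $\mathbb{Z}$, and rank $\chi(\mathbb{Q}(c,v))\geqslant 1$. One could instead invoke Silverman's specialization theorem, but a single well-chosen fibre already suffices. The only real obstacle is the size of the expressions: $K,L,M,N,P$ are already bulky polynomials in $c$ and $v$, and $I$, $J$, $\Delta(\chi)$ and the coordinates of $R$ are far bulkier, so checking (i) and (ii) — and, to a lesser extent, searching for a pair $(c_0,v_0)$ giving a smooth fibre on which $R_0$ is visibly non-torsion — is a matter of careful symbolic computation rather than of any conceptual difficulty; such pairs are plentiful because the bad locus is a proper Zariski-closed subset of the $(c,v)$-plane.
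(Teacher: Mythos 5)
Your proposal is correct and follows essentially the same route as the paper: reduce the quartic \eqref{eq:8} to the Weierstrass model \eqref{eq:9} via the standard construction (using that $K$ is a square to get the rational point $R$), then specialize $(c,v)$ to a rational pair, check by computer that the specialized point has infinite order, and conclude that $R$ is non-torsion generically (the paper takes $c=3$, $v=3094/5795$ and cites Silverman's specialization theorem where you note the bare homomorphism property already suffices).
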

\begin{proof}
Write the homogenous form of the curve \eqref{eq:8}. Then one gets $Y^2=KX^4+LX^3Z+MX^2Z^2+NXZ^3+PZ^4$ with a rational point $T=(X:Y:Z)=(1:(c+1)(3c^2+3c+1)(3c^2+6c+4)(3c^2+9c+7)(c^2+2c+3):0)$. According to the above procedure, the curve \eqref{eq:8} is birationally equivalent to \eqref{eq:9} and taking $c=3$, $v=\frac{3094}{5795}$, using \eqref{eq:10}-\eqref{eq:12}, one gets the specialization \\
\~{R} =$(\frac{4692656977319420928}{9025}, \frac{69761912906449000257785856}{9025})$ of the point $R$ on the specialized elliptic curve

\begin{equation*}
\begin{aligned}
\psi :&Y^2=X^3-\frac{19155688278708494907117216280017764352}{81450625}X\\
&+\frac{30476125037279414454071839383853830234262941440938082304}{735091890625}.
\end{aligned}
\end{equation*}
By MAGMA, \cite{BCP}, we see that the point \~{R} is a point of infinite order on $\psi$. Thus, by the Silverman specialization theorem, \cite[Theo. 11.4]{S}, the point $R$ is of infinite order on $\chi$.
\end{proof}

\begin{corol}\label{cor:3.4}
Let a nontrivial sequence of consecutive rational cubes be $(c_0-2)^3, (c_0-1)^3, c_0^3, (c_0+1)^3, (c_0+2)^3$. Then there are infinitely many elliptic curves of the form $E_j: y^2=k_jx^3+l_jx+m_j, 0\neq j \in \mathbb{Z}$, such that $(c_0+i)^3, i=-2, -1, 0, 1, 2,$ is the $x-$coordinate of a rational point on $E_j$. Furthermore, these five rational points are linearly independent.
\end{corol}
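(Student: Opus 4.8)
The plan is to promote the positive rank supplied by Theorem~\ref{theo:3.3} into an infinite family of genuine elliptic curves over $\mathbb{Q}$, and then to deduce the linear independence of the five points from a single explicit instance by specialization.

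Recall first that a rational point on the curve $H$ of \eqref{eq:8} is exactly a solution $(u,t)$ (we have set $w=1$) of \eqref{eq:7}, and that any such solution, substituted into the parametrisation \eqref{eq:6} and then into the rational expressions for $k,l,m$ obtained earlier (the last of which is \eqref{eq:4}), yields a curve $E\colon y^2=kx^3+lx+m$ carrying rational points with $x$-coordinates $(c-2)^3,(c-1)^3,c^3,(c+1)^3,(c+2)^3$. Now fix the sequence, that is a rational number $c_0$; we call it \emph{nontrivial} if it lies outside the proper closed (in particular, cofinite in $\mathbb{Q}$) locus of values at which some denominator in \eqref{eq:4}--\eqref{eq:12} vanishes, at which $k=0$ or the discriminant of $E$ vanishes, or at which the point $R$ of \eqref{eq:12} fails to remain of infinite order, the last exclusion being finite by Silverman's specialization theorem \cite[Theo. 11.4]{S}. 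For such $c_0$ and all but finitely many $v_0\in\mathbb{Q}$, the specialized curve $\chi_{c_0,v_0}$ is an elliptic curve over $\mathbb{Q}$ of positive rank; transporting the points $nR$, $n\in\mathbb{N}$, through the birational maps behind \eqref{eq:8}--\eqref{eq:12} gives infinitely many distinct rational points of $H_{c_0,v_0}$, i.e.\ infinitely many rational solutions $(u_n,t_n)$ of \eqref{eq:7}, and each produces an honest elliptic curve $E/\mathbb{Q}$ of the required shape on which $(c_0+i)^3$, $i=-2,\dots,2$, occur as $x$-coordinates of rational points. Label these curves $E_j$, $0\neq j\in\mathbb{Z}$.

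To see that infinitely many of the $E_j$ are \emph{distinct} (and not merely that one curve carries infinitely many rational points), note that a fixed elliptic curve over $\mathbb{Q}$ has at most two rational points above any given $x$-coordinate; since the five abscissae $(c_0+i)^3$ are fixed, a given curve can occur for only finitely many of the underlying solutions $(u_n,t_n)$, because $t=t(u)$ takes infinitely many values as $n$ varies: $t^2=Ku^4+Lu^3+Mu^2+Nu+P$ is a non-constant polynomial in $u$ (here $K\neq 0$ for $c_0$ nontrivial). Hence the $E_j$ fall into infinitely many isomorphism classes.

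It remains to prove that $P_{-2},\dots,P_2$ are $\mathbb{Z}$-linearly independent in $E_j(\mathbb{Q})$, whence rank $E_j(\mathbb{Q})\geq 5$. I would first establish independence on the generic member of the family, i.e.\ over $\mathbb{Q}(c,v)$ (equivalently, over the function field of $H$): any nontrivial relation $\sum_i n_iP_i=O$ would then specialize to every good fibre, so by Silverman's specialization theorem \cite[Theo. 11.4]{S} (applied repeatedly if necessary) the specialized points stay independent on all but finitely many $E_j$. Generic independence itself reduces to one explicit check: with $c=3$, $v=\frac{3094}{5795}$, $w=1$ and $(u_0,t_0)$ the image through the birational maps of the infinite-order point $\tilde R$ from the proof of Theorem~\ref{theo:3.3}, one obtains a specific $E_0/\mathbb{Q}$, $y^2=k_0x^3+l_0x+m_0$, together with the five explicit points, and one verifies with MAGMA \cite{BCP} that the $5\times 5$ matrix of N\'eron--Tate height pairings $\langle P_a,P_b\rangle$ is nonsingular; a relation over $\mathbb{Q}(c,v)$ would specialize to a relation on $E_0$, contradicting this. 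The main obstacle is precisely this last step: it is not accessible by a purely formal argument and rests on the numerical computation together with the bookkeeping needed to confirm that the points produced by the chain \eqref{eq:6}$\to$\eqref{eq:7}$\to$\eqref{eq:8}$\to$\eqref{eq:9}$\to$\eqref{eq:12} are genuinely the five consecutive-cube points and that the height matrix is computed on the correct specialized curve.
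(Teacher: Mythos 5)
Your proposal follows essentially the same route as the paper: the multiples of the infinite-order point supplied by Theorem~\ref{theo:3.3} generate the family via the parametrization \eqref{eq:6} and the formulae \eqref{eq:4}, and linear independence of the five points is reduced, via Silverman's specialization theorem, to a single MAGMA verification at $c=3$, $v=\frac{3094}{5795}$, $w=1$. The only substantive addition is your explicit argument that infinitely many of the resulting curves $E_j$ are pairwise distinct (via the bound on rational points above a fixed abscissa), a point the paper leaves implicit.
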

\begin{proof}
Set $c=c_0, v=v_0$ and $w=1$ in \eqref{eq:6}. Then one gets the elliptic curve
\begin{equation}\label{eq:13}
\chi_{c_0,v_0,1}: t^2=Ku^4+Lu^3+Mu^2+Nu+P
\end{equation} 
$K, L, M, N, P \in \mathbb{Q},$ and according to Theorem \ref{theo:3.3} its rank is positive. Then we will find a point $R=(u, t)$ of infinite order in $\chi_{c_0,v_0,1}(\mathbb{Q})$. Set $jR=(u_j, t_j)$ with $0 \neq j \in \mathbb{Z}$ to be the $j-th$ multiple of the point $R$ in $\chi_{c_0,v_0}(\mathbb{Q})$.

Now, substituting $c=c_0, v=v_0, w=1$ and $u=u_j$ into the formulae for $p, q, r, s \in \mathbb{Q}(c,u,v,w)$ in \eqref{eq:4}, one gets the rational numbers $p_j, q_j, r_j, s_j$ respectively.
Then substituting $p_j, q_j, r_j, s_j$ into formulae for $k, l, m \in \mathbb{Q}(c,p,q,r)$ in \eqref{eq:4} one obtains the rational numbers $k_j, l_j, m_j$ respectively.

Hence we constructed an infinite family of elliptic curves $ E_j: y^2=k_jx^3+l_jx+m_j$ with $0\neq j \in \mathbb{Z}$. This infinite family $E_j$ of elliptic curves has the point  $((c_0-1)^3, p_j), (c_0^3, q_j), ((c_0+1)^3, r_j), ((c_0+2)^3, s_j), ((c_0-2)^3, t_j) \in E_j(\mathbb{Q})$. This means that we get an infinite family of elliptic curves with a $5-$term sequence of rational points whose $x-$coordinates consist of a sequence of consecutive cubes in $ \mathbb{Q}$.

Now, we will show that the points  $((c_0-1)^3, p_j), (c_0^3, q_j), ((c_0+1)^3, r_j), ((c_0+2)^3, s_j), ((c_0-2)^3, t_j) \in E_j(\mathbb{Q})$ are linearly independent. To do this we first need to find a point $(u,t)$ lying in \eqref{eq:13}.

Consider the equation \eqref{eq:13}. Taking $c=3, v=\frac{3094}{5795}, w=1$, we obtain the curve

\begin{equation}\label{eq:14}
\begin{aligned}
{t}^{2}&=63404527588416\,{u}^{4}-{\frac {782109496219903488}{9025}}\,{u}^{2}+{\frac{19793578415844699648}{550525}}\,u\\
&+{\frac {478172417894196583574016}{303077775625}}.
\end{aligned}
\end{equation}
Completing the square on the RHS of \eqref{eq:14}, we obtain a point of infinite order
\begin{equation*}
(u,t)=(\frac{60547}{77653}, \frac{78134116669224}{130068775})\in \chi_{3,\frac{3094}{5795},1}(\mathbb{Q}).
\end{equation*}
So, using the specialization $c=3$, $v=\frac{3094}{5795}$, $w=1$, $u=\frac{60547}{77653}$ we obtain a specialized elliptic curve

\begin{equation*}
\begin{aligned}
E:y^2=&\frac{1019317647604532728704}{50501152925375}x^3+\frac{170640863010859366860672}{2657955417125}x+\\
&\frac{5018469623203840351296469056}{16917886230000625}
\end{aligned}
\end{equation*}
 with the following set of consecutive cubes in
 \begin{equation*}
 \begin{aligned}
 E(\mathbb{Q})=&(1,\frac{78134116669224}{130068775}), (2^3, \frac{117823324221624}{130068775}),(3^3, \frac{202645347682344}{130068775}),\\
&(4^3, \frac{405025200935544}{130068775}), (5^3, \frac{898732973533416}{130068775}).
 \end{aligned}
\end{equation*} 
 By MAGMA, we see that these rational points are linearly independent.
 
 By the Silverman Specialization Theorem, we can say that the points\\
$((c-1)^3, p_j), (c^3, q_j), ((c+1)^3, r_j), ((c+2)^3, s_j), ((c-2)^3, t_j)$ are linearly independent in $E_j$ over $\mathbb{Q}(c,v,u_j)$. This completes the proof of the corollary.
\end{proof}

\begin{remark}\label{rem:3.5}
The previous corollary implies the existence of an infinite family of elliptic curves whose rank $r \geq 5$.
\end{remark}

Finally, if we want to construct a $6-$term sequence of consecutive cubes, we assume that the point $((c+3)^3,z)$ is a rational point on \eqref{eq:3}. So, the following relation is satisfied
\begin{equation*}
z^2=K'u^4+L'u^3+M'u^2+N'u+P'
\end{equation*} with $K', L', M', N', P' \in \mathbb{Q}(c,v)$. So we give the following remark.

\begin{remark}\label{rem:3.6}
The existence of a $6-$term sequence of consecutive cubes on the elliptic curve depends on the existence of a rational point $(u, t, z)$ on the algebraic curve defined by the following intersection
\begin{equation*}
C: t^2=Ku^4+Lu^3+Mu^2+Nu+P, \ z^2=K'u^4+L'u^3+M'u^2+N'u+P'.
\end{equation*}
The genus of the curve $C$ is $5$. Thus , Falting's Theorem says that for given $c\in \mathbb{Q} $, there are only finitely many elliptic curves over $\mathbb{Q}$ defined by $y^2=kx^3+lx+m$ where $(c+j)^3, j=-2,-1,0,1,2,3$ are the $x-$coordinates of a $6-$term sequence of consecutive cubes.
\end{remark}

\subsection*{Acknowledgements}

We would like to thank the referee for carefully reading our manuscript and for giving such constructive comments which substantially helped improving the presentation of the paper. And also we thank Professor Mohammad Sadek for his useful discussions. The second author was supported by the Research Fund of Uluda\u{g} University under Project No: F-2016/9.


\begin{thebibliography}{99}

\bibitem{Al} {\sc A. Alvarado}, An arithmetic progression on quintic curves, {\em J. Integer Seq.} {\bf 12} (2009), Article 09.7.3.

\bibitem{BCP} {\sc W. Bosma, J. Cannon, C. Playoust}, The Magma algebra system I. The user language, {\em J. Symbolic Comput.} {\bf 24} (1997), 235-265.

\bibitem{Br} {\sc A. Bremner}, On arithmetic progressions on elliptic curves,  {\em Experiment Math.} {\bf 8}, (1999), 409-413.

\bibitem{BU} {\sc A. Bremner, M. Ulas}, Rational points in geometric progressions on certain hyperelliptic curves, {\em Publ. Math. Deb.} {\bf 82} (2013), 669-683.

\bibitem{Cam} {\sc G. Campbell}, A note on arithmetic progressions on elliptic curves, {\em J. Integer Seq.} {\bf 6} (2003), Article 03.1.3.

\bibitem{JC} {\sc J. E. Cremona}, Algorithms for Modular Elliptic Curves, Cambridge University Press., $2$ nd edu, (1997).

\bibitem{DM} {\sc P. K. Dey, B. Maji}, Arithmetic progressions on  $y^2=x^3+k$, {\em J. Integer Seq.} {\bf 19} (2016), Article 16.7.4.

\bibitem{Fa} {\sc G. Faltings}, Endlichkeits\"{a}tze fur abelsche variet\"{a}ten uber Zahlk\"{o}rpern, {\em Invent. Math.} {\bf 73} (1983), 349-366.

\bibitem{KS} {\sc M. Kamel, M. Sadek}, On sequences of consecutive squares on elliptic curves, {\em Glasnik Math.} {\bf 52} (2017), 45-52.

\bibitem{LV} {\sc J. B. Lee, W. Y. V\'{e}lez}, Integral solutions in arithmetic progression for $y^{2}=x^{3}+k$, {\em Per. Math. Hung.} {\bf 25} (1992), 31-49.

\bibitem{Mac} {\sc A. J. Macleod}, $14$-term arithmetic progressions on quartic elliptic curves, {\em J. Integer Seq.} {\bf 9} (2006), Article 06.1.2.

\bibitem{Mor} {\sc L. J. Mordell}, Diophantine equations, Academic Press., New York, (1969).

\bibitem{S} {\sc J. H. Silverman}, Advanced Topics in the Arithmetic of Elliptic Curves, Springer-Verlag, New York, (1994).	

\bibitem{Ul} {\sc M. Ulas}, A note on arithmetic progressions on quartic elliptic curves, {\em J. Integer Seq.} {\bf 8} (2005), Article 05.3.1.

\bibitem{U2} {\sc M. Ulas}, On arithmetic progressions on genus two curves, {\em Rocky Mountain J. Math.} {\bf 39} (2009), 971-980.	
	
	
\end{thebibliography}
\end{document}